\documentclass{amsart}
\usepackage{amsfonts}
\usepackage{graphicx}
\usepackage{amscd}
\usepackage{amsmath}
\usepackage{amssymb}

\makeatletter
\@namedef{subjclassname@2010}{%
  \textup{2010} Mathematics Subject Classification}
\makeatother

\setcounter{MaxMatrixCols}{30}
\theoremstyle{plain}

\newtheorem{corollary}{\bf Corollary}

\newtheorem{lemma}{\bf Lemma}

\newtheorem{remark}{Remark}

\newtheorem{theorem}{\bf Theorem}

\theoremstyle{definition}

\numberwithin{equation}{section}

\title[quasi Einsten manifold]{On the classification of noncompact steady quasi-Einstein manifold with vanishing condition on the Weyl tensor}

\author{H. Baltazar}
\author{M. Matos Neto}

\address[H. Baltazar]{Departamento de Matem\'{a}tica, Universidade Federal do Piau\'{\i}\\
64049-550 Te\-re\-si\-na, Piau\'{\i}, Brazil.}
\email{halyson@ufpi.edu.br}

\address[M. Matos Neto]{Departamento de Matem\'{a}tica, Universidade Federal do Piau\'{\i}\\
64049-550 Te\-re\-si\-na, Piau\'{\i}, Brazil.}
\email{mvieira@ufpi.edu.br}

\subjclass[2010]{Primary 53C25, 53C20, 53C21; Secondary 53C65}
\keywords{Quasi-Einstein manifold; warped product; Weyl tensor}
\date{July 28, 2017}

\begin{document}

\newcommand{\spacing}[1]{\renewcommand{\baselinestretch}{#1}\large\normalsize}
\spacing{1.2}

\begin{abstract}
The aim of this paper is to study complete (noncompact) steady $m$-quasi-Einstein manifolds satisfying a fourth-order vanishing condition on the Weyl tensor. In this case, we are able to prove that a steady $m$-quasi-Einstein manifold ($m>1$) on a simply connected $n$-dimensional manifold $(M^{n},g)$, $(n\geq4),$ with nonnegative Ricci curvature and zero radial Weyl curvature must be a warped product  with $(n-1)-$dimensional Einstein fiber, provided that $M$ has fourth order divergence-free Weyl tensor (i.e., ${\rm div}^{4}W=0$).
\end{abstract}

\maketitle

\section{Introduction}
\label{intro}

Following the terminology used in \cite{CC,Ribeiro} we recall the definition of quasi-Einstein manifold. A complete Riemannian manifold $(M^{n},g)$, $n\geq2,$ will be called $m$-quasi-Einstein manifold or simply quasi-Einstein, if there exist a smooth potential function f on $M,$ a constant $m$ with $0<m\leq+\infty$  and a constant $\lambda$ satisfying
\begin{equation}\label{eqqEinstein}
Ric+Hessf-\frac{1}{m}df\otimes df=\lambda g,
\end{equation}
where $Hessf$ stands for a Hessian of $f$. We shall refer to this equation as the fundamental equation of a quasi-Einstein manifold $(M^{n},g,f,\lambda)$.
We also recall the well known $m$-Bakry-Emery Ricci tensor that is given by
$$Ric_{f}^{m}=Ric+Hessf-\frac{1}{m}df\otimes df,$$
where $f$ is a smooth function on $M^{n}$,  for more details about this tensor see for instance \cite{Bakry,Qian,WW}. Therefore, the fundamental equation of the quasi-Einstein (\ref{eqqEinstein}) can be rewritten as
\begin{equation}\label{eqricf}
Ric_{f}^{m}=\lambda g.
\end{equation}
A quasi-Einstein manifold is called shrinking if $\lambda>0,$ steady if $\lambda=0$ and expanding if $\lambda<0.$

\begin{remark}
It is important highlight that, if $(M,g)$ and $(F,g_{F})$ are Riemannian manifolds, the warped product $(M\times F,\overline{g}=g+e^{-\frac{2}{m}f}g_{F})$ is an Einstein manifold with Einstein constant $\lambda$ if and only if $(F,g_{F})$ is Einstein ($Ric_{g_{F}}=\mu g_{F}$), satisfies (\ref{eqricf}) for metric $g,$ and
\begin{equation}\label{mu}
\Delta f=|\nabla f|^{2}+m\lambda-m\mu e^{\frac{2}{m}f}.
\end{equation}
In 2003, Kim and Kim showed that every quasi-Einstein manifold must satisfy (\ref{mu}) for some constant $\mu.$ For more references on Einstein warped products and quasi-Einstein metrics, see \cite{batista15,Silva14,besse,CSW,KK,Rimoldi}.
\end{remark}

Let us point out that if $m=\infty$ the equation (\ref{eqqEinstein}) becomes the fundamental equation of the gradient Ricci solitons, in this case, we refer the reader to the survey papers \cite{Cao10,Cao11} and references therein for a overview on this subject. Moreover, recent results can be found in
\cite{CC12,CaoChen,CCCMM,catino13,cmm,FG11} and \cite{MS13}.

In 2012, Chenxu He, Petersen and Wylie proved that, if a complete, simply connected manifold has harmonic Weyl tensor and satisfies $W(\cdot,\nabla f,\cdot,\nabla f)=0$, then $(M,g)$ is a $m$-quasi Einstein metric if and only if it is a warped product with Einstein fibers. More precisely, they proved the following result.

\begin{theorem}[He-Petersen-Wylie, \cite{CPW}]\label{cpwylie}
Let $m>1$ and suppose that $(M,g)$ is complete, simply connected, and has harmonic Weyl tensor and $W(\nabla f,\cdot,\cdot,\nabla f)=0$, then $(M,g,f)$ is a nontrivial $m$-quasi-Einstein metric if and only if it is of the form
$$g=dt^{2}+\psi^{2}(t)g_{L}\;\;\;and\;\;\;f=f(t),$$
where $g_{L}$ is an Einstein metric. Moreover, if $\lambda\geq0$ then $(L,g_{L})$ has non-negative Ricci curvature, and if it is Ricci flat, then $\psi$ is constant, i.e., $(M^{n},g)$ is a Riemannian product.
\end{theorem}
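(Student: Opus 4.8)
The plan is to establish, on the open set where $\nabla f\neq 0$, that the Ricci tensor has precisely the eigenvalue structure of a warped product---that $\nabla f$ is an eigenvector of $Ric$, and that $Ric$ restricted to the distribution $(\nabla f)^{\perp}$ is a multiple of the metric---and then to invoke a characterization of warped products in terms of the Hessian of $f$. The converse implication (a warped product $dt^{2}+\psi^{2}(t)g_{L}$ with Einstein fiber and $f=f(t)$ satisfying the appropriate ODE is $m$-quasi-Einstein) is a direct curvature computation for warped products and reduces to the identity (\ref{mu}), so the real content lies in the forward direction.

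First I would differentiate the fundamental equation (\ref{eqqEinstein}) and commute third covariant derivatives using the Ricci identity $\nabla_{i}\nabla_{j}\nabla_{k}f-\nabla_{j}\nabla_{i}\nabla_{k}f=-R_{ijkl}\nabla^{l}f$. After substituting $Hessf=\lambda g-Ric+\frac{1}{m}df\otimes df$ back in, this yields an integrability identity of the form
\[
\nabla_{i}R_{jk}-\nabla_{j}R_{ik}=R_{ijkl}\nabla^{l}f+\frac{\lambda}{m}\bigl(\nabla_{j}f\,g_{ik}-\nabla_{i}f\,g_{jk}\bigr)-\frac{1}{m}\bigl(\nabla_{j}f\,R_{ik}-\nabla_{i}f\,R_{jk}\bigr).
\]
The hypothesis that $W$ is harmonic is equivalent, for $n\geq4$, to the vanishing of the Cotton tensor, i.e. $\nabla_{i}R_{jk}-\nabla_{j}R_{ik}=\frac{1}{2(n-1)}\bigl(\nabla_{i}R\,g_{jk}-\nabla_{j}R\,g_{ik}\bigr)$, so the left-hand side is completely determined by $\nabla R$. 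Plugging in the standard decomposition of $R_{ijkl}$ into its Weyl and Ricci/scalar parts then isolates the term $W_{ijkl}\nabla^{l}f$ and rewrites the whole identity as an algebraic relation among $W_{ijkl}\nabla^{l}f$, $Ric$, $\nabla f$ and $\nabla R$.

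The step I expect to be the main obstacle is the tensor algebra that extracts the eigenstructure from this relation. Here I would contract with $\nabla^{i}f$ and use the radial hypothesis $W(\nabla f,\cdot,\cdot,\nabla f)=0$, i.e. $W_{ijkl}\nabla^{i}f\nabla^{l}f=0$, to remove the Weyl contribution; the resulting identities should force $Ric(\nabla f,\cdot)=\sigma\,df$ for some function $\sigma$ (so $\nabla f$ is an eigendirection) and $Ric|_{(\nabla f)^{\perp}}=\rho\,g|_{(\nabla f)^{\perp}}$ for some function $\rho$. Controlling the remaining Weyl terms on $(\nabla f)^{\perp}$ and checking that no extra eigenvalues can appear is the delicate part, since one must exploit all the trace-free symmetries of $W$ together with the radial condition, rather than just a single contraction.

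Once this eigenstructure is in hand, $Hessf=\lambda g-Ric+\frac{1}{m}df\otimes df$ inherits it: $\nabla f$ is an eigenvector and $Hessf$ is a multiple of $g$ on $(\nabla f)^{\perp}$. Geometrically this says the regular level sets of $f$ are totally umbilic with mean curvature constant on each leaf, which is exactly the condition guaranteeing that $(M,g)$ is locally $dt^{2}+\psi^{2}(t)g_{L}$ with $t$ a reparametrization of $f$ and $f=f(t)$. The fiber $(L,g_{L})$ is then forced to be Einstein by restricting the quasi-Einstein equation to a leaf and using (\ref{mu}) to pin down the constant $\mu$. Finally I would use completeness together with simple connectedness to pass from this local statement to a global warped product---simple connectedness makes the leaves connected and lets one glue smoothly across critical points of $f$ (which appear as isolated poles)---and I would read off the \emph{moreover} assertions from the ODE satisfied by $\psi$: the sign of $\mu$ (hence of the fiber's Ricci curvature) is tied to the sign of $\lambda$, and $\mu=0$ forces $\psi$ to be constant, giving a Riemannian product.
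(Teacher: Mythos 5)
First, a point of context: the paper does not prove this statement at all --- it is Theorem~\ref{cpwylie}, quoted from He--Petersen--Wylie \cite{CPW} and used as a black box in the deduction of Theorem~\ref{thmB}. So your proposal can only be measured against the original argument in \cite{CPW} (and the closely related arguments of Cao--Chen and Chen--He). Your overall route is in fact the same as theirs. The integrability identity you derive is correct and is exactly the computation behind Lemma~\ref{auxCotton}, i.e. $C_{ijk}=\frac{m+n-2}{m}D_{ijk}-W_{ijkl}\nabla_{l}f$; harmonic Weyl (for $n\geq4$) kills $C_{ijk}$, and contracting with $\nabla^{i}f$ and using $W_{ijkl}\nabla^{i}f\nabla^{l}f=0$ gives $D_{ijk}\nabla^{i}f=0$. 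The antisymmetric part in $(j,k)$ of this relation forces $Ric(\nabla f,\cdot)=\sigma\,df$, and the symmetric part then yields $Ric=\frac{R-\sigma}{n-1}\,g+\bigl(\frac{n\sigma-R}{n-1}\bigr)\frac{df\otimes df}{|\nabla f|^{2}}$ on $\{\nabla f\neq0\}$. So the step you flagged as the main obstacle actually goes through cleanly, with no residual Weyl terms to control.

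The genuine gap is in the next step, where you claim that this pointwise eigenstructure of the Hessian of $f$ ``says the regular level sets of $f$ are totally umbilic with mean curvature constant on each leaf, which is exactly the condition guaranteeing'' the local form $dt^{2}+\psi^{2}(t)g_{L}$. Pointwise umbilicity, together with $|\nabla f|$ being constant on level sets (which does follow, since $\nabla_{X}|\nabla f|^{2}=2\,Hessf(X,\nabla f)=0$ for $X\perp\nabla f$), does \emph{not} by itself imply that the umbilicity factor is constant along each leaf; a priori the eigenvalues $\sigma$ and $\rho$ vary within a level set, and then the metric is not a warped product. Their constancy along leaves is a second, independent use of the hypotheses: since $C_{ijk}=0$ means the Schouten tensor is a Codazzi tensor, one either invokes Derdzi\'nski's splitting theorem for Codazzi tensors with an eigenvalue of multiplicity $n-1\geq3$ (see Besse \cite{besse}, Ch.~16), or argues directly --- e.g. Lemma~\ref{L1} contracted with $X\perp\nabla f$ gives $\frac{1}{2}X(R)=\frac{m-1}{m}Ric(X,\nabla f)=0$ once $\nabla f$ is known to be an eigenvector, and the Codazzi property then forces the tangential derivative of $\rho$ to vanish. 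This is precisely the content of the corresponding structure propositions in \cite{CPW}, and your sketch replaces it by assertion. The remaining pieces of your outline (Einstein fiber from the quasi-Einstein equation in the warped product form, gluing across critical points via completeness and simple connectedness, and reading the ``moreover'' claims off the ODE for $\psi$) are standard and correct in spirit.
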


Later on, assuming that the manifold is Bach-flat, Chen and He showed in \cite{CC} that any shrinking quasi-Einstein manifold is either Einstein or a finite quotient of a warped product with $(n-1)$-dimensional Einstein fiber. At this point, it is important to say that if $m$ is positive then a quasi-Einstein manifold is compact if and only if $\lambda>0$ (see, for example, \cite{KK,Qian} and \cite{WW07} for a complete description). More recently, Ranieri and Ribeiro Jr. \cite{Ribeiro} have studied steady quasi-Einstein metrics under Bach-flat assumption. In this case, the authors proved that a Bach-flat noncompact steady quasi-Einstein manifold with positive Ricci curvature must be a warped product with Einstein fiber. In this paper, motivated by the historical development on the study of the quasi-Einstein manifolds, we shall investigate such structure satisfying a fourth-order vanishing condition on the Weyl tensor.

Before presenting our first result, it is fundamental to remember that a Riemannian manifold $(M^{n}, g)$ has {\it zero radial Weyl curvature} when
$$W(\cdot,\cdot,\cdot,\nabla f)=0,$$
for a suitable smooth function $f.$ This condition have been used to classify quasi-Einstein manifolds or more general generalized quasi-Einstein manifolds, see for instance, \cite{catino,CPW,Bleandro} and \cite{pwylie}.

In the sequel, in the the same spirit of the recent work due Catino, Mastrolia and Monticelli \cite{cmm}, let us introduce the following definitions:
$${\rm div}^{4}W=\nabla_{k}\nabla_{j}\nabla_{i}\nabla_{l}W_{ijkl}$$
and
$${\rm div}^{3}C=\nabla_{k}\nabla_{j}\nabla_{i}C_{ijk},$$
where $W$ and $C$ are the Weyl and the Cotton tensors, respectively (see the definitions of this tensors in the Section \ref{Preliminaries}). In \cite{cmm}, the authors showed that n-dimensional complete gradient shrinking Ricci solitons with fourth order divergence free Weyl tensor (i.e., ${\rm div}^{4}W=0$) are either Einstein or finite quotients of  $N^{n-k}\times \mathbb{R}^{k},$ $(k>0).$ That is, the product of a Einstein manifold $N^{n-k}$ with the Gaussian shrinking soliton $\mathbb{R}^{k}.$ Recently, other rigidity results have been obtained under vanishing condition on the Weyl tensor, see, for example,  \cite{YZ,YWZ}.

With this notation in mind, Ranieri and Ribeiro Jr. showed in \cite{Ribeiro} that, under certain appropriate constraints, a Bach-flat noncompact steady quasi-Einstein manifold must have fourth order divergence-free Weyl tensor and satisfies the zero radial Weyl curvature condition (i.e., $W_{ijkl}\nabla_{l}f=0$). It is natural to ask if the converse of this statement is also true. In this sense, inspired by ideas outlined in \cite{CCCMM} and \cite{Ribeiro}, we shall replace the assumption of Bach-flat in \cite[Theorem 1]{Ribeiro} by the condition that $M$ has fourth order divergence-free Weyl tensor. In fact, we have the following result.

\begin{theorem}\label{thmA}
Let $(M^{n},g,f,m>1),$ $n\geq4,$ be a noncompact steady quasi-Einstein manifold with positive Ricci curvature such that $f$ has at least one critical point. If, in addition, $(M,g)$ has zero Weyl radial curvature and satisfies ${\rm div}^{4}W=0,$ then $M^{n}$ has harmonic Weyl tensor.
\end{theorem}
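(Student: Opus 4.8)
The plan is to prove that the Cotton tensor $C$ vanishes identically. Recall that for $n\geq4$ the contracted second Bianchi identity gives $\nabla_{l}W_{ijkl}=\tfrac{n-3}{n-2}\,C_{ijk}$, so that $M$ has harmonic Weyl tensor (${\rm div}\,W=0$) if and only if $C\equiv0$. Iterating this identity, ${\rm div}^{4}W=\tfrac{n-3}{n-2}\,{\rm div}^{3}C$, and since $n\geq4$ the coefficient is nonzero, so the hypothesis ${\rm div}^{4}W=0$ is \emph{equivalent} to ${\rm div}^{3}C=0$. Thus the whole problem reduces to showing that ${\rm div}^{3}C=0$, together with the steady structure and zero radial Weyl curvature, forces $C\equiv0$.

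First I would extract the structural identities carried by (\ref{eqqEinstein}) with $\lambda=0$. Writing the fundamental equation as $Hessf=-Ric+\tfrac{1}{m}df\otimes df$, I would take a covariant derivative, antisymmetrize, and apply the Ricci commutation identity for the third derivatives of $f$ to obtain $\nabla_{k}R_{ij}-\nabla_{i}R_{kj}=R_{kijl}\nabla_{l}f+\tfrac{1}{m}(\nabla_{k}f\,R_{ij}-\nabla_{i}f\,R_{kj})$. Decomposing the curvature tensor into its Weyl and Ricci parts and invoking the zero radial Weyl hypothesis $W_{ijkl}\nabla_{l}f=0$ then expresses the left-hand side—hence the Cotton tensor $C_{ijk}$—purely in terms of $Ric$, $R$, $g$ and $\nabla f$; concretely this yields a Cao--Chen type identity $C_{ijk}=D_{ijk}$, where $D$ is the auxiliary tensor built from $Ric$, $R$, $g$ and $\nabla f$. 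From this one reads off the companion contraction relations (in particular that the $\nabla f$-contractions of $C$ collapse to lower-order curvature expressions) and a clean formula for the first divergence $\nabla_{i}C_{ijk}$ in terms of $Ric$, $R$ and $\nabla f$; these are the raw material for the integration below.

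The analytic heart is to convert ${\rm div}^{3}C=0$ into a definite-sign integral. I would integrate the identity $0=\int_{M}({\rm div}^{3}C)\,w\,dV$ against a suitable weight $w=w(f)$ and integrate by parts three times, landing all three derivatives on $w$. Since $C$ is essentially the antisymmetrized $\nabla Ric$, the crucial term so produced is $\int_{M}C_{ijk}\,\nabla_{i}R_{jk}\,w'(f)\,dV\sim\int_{M}|C|^{2}\,w'(f)\,dV$, while the remaining terms pair $C$ with $\nabla f\otimes\nabla f\otimes\nabla f$ and with $\nabla f\otimes Ric$; using the $\nabla f$-contraction relations from $C=D$ and the sign afforded by $Ric>0$, these are either zero or of definite sign. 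Choosing $w$ so that $w'(f)$ has a fixed sign, one is left with $\int_{M}\big(\alpha\,|C|^{2}+(\text{sign-definite curvature terms})\big)\,w\,dV=0$ with $\alpha>0$, which forces $C\equiv0$ and hence harmonic Weyl tensor.

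The main obstacle is twofold. Computationally, one must derive the precise pointwise identity underlying the previous paragraph: this is a heavy but essentially mechanical calculation in which the steady equation, the second Bianchi identity and the relation $C=D$ must be combined so that $|C|^{2}$ emerges with a definite coefficient and every other term is controlled in sign by positive Ricci curvature. Analytically, the delicate point is justifying that the boundary (flux) term vanishes on the noncompact $M$ when integrating by parts, and it is exactly here that positive Ricci curvature and the existence of a critical point of $f$ enter. At a critical point $p$ the steady equation gives $Hessf(p)=-Ric(p)\prec0$, so $p$ is an isolated strict maximum; a Qian/maximum-principle type argument then shows that $f$ is proper with compact level sets and controlled growth, so that the superlevel sets $\{f\geq c\}$ exhaust $M$ and the weight $w(f)$ decays fast enough (against the curvature bounds coming from $Ric>0$) to kill the flux at infinity in Stokes' theorem. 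Once $\int_{M}|C|^{2}\,w\,dV=0$ with $w>0$, I conclude $C\equiv0$, i.e. $M^{n}$ has harmonic Weyl tensor.
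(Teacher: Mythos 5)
Your overall skeleton matches the paper's strategy: reduce ${\rm div}^{4}W=0$ to ${\rm div}^{3}C=0$ via the Cotton--Weyl relation, use zero radial Weyl curvature together with the Chen--He identity (which reads $C_{ijk}=\frac{m+n-2}{m}D_{ijk}$ once $W_{ijkl}\nabla_{l}f=0$, not $C=D$) to make $|C|^{2}$ appear under a weighted integral, and kill the flux at infinity using positive Ricci and the critical point of $f$. However, your execution of the analytic core has a genuine gap. You propose to integrate $\int_{B_{p}(s)}({\rm div}^{3}C)\,w\,dV$ by parts \emph{three} times, landing all derivatives on $w$. The intermediate flux terms this produces are of the form $\int_{\partial B_{p}(s)}(\nabla_{j}\nabla_{i}C_{ijk})\,w\,\nu_{k}\,d\sigma$ and $\int_{\partial B_{p}(s)}(\nabla_{i}C_{ijk})\,\nabla_{k}w\,\nu_{j}\,d\sigma$; the second of these involves genuine first derivatives of the Cotton tensor (equivalently of $Ric$), and \emph{nothing} in the hypotheses gives any pointwise bound on $|\nabla C|$ at infinity. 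Decay of $w$, however fast, cannot beat a quantity with no a priori growth control, so "the weight decays fast enough to kill the flux" cannot be justified in your scheme. The paper is built precisely to avoid this: using the pointwise identities $(n-2)B_{ij}=\nabla_{k}C_{kij}+W_{ikjl}R_{kl}$ and $\nabla_{p}B_{pi}=\frac{n-4}{(n-2)^{2}}C_{ijk}R_{jk}$ (equivalently, since $C_{ijk}=-C_{jik}$, the double divergence $\nabla_{j}\nabla_{i}C_{ijk}=\frac{1}{2}[\nabla_{j},\nabla_{i}]C_{ijk}$ is \emph{algebraic} in $Rm\ast C$), ${\rm div}^{3}C$ is rewritten as a \emph{single} divergence of a vector field algebraic in $C$, $Ric$ and $W$. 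Only one integration by parts is ever performed, and its flux, $\int_{\partial B_{p}(s)}\phi\,C_{ijk}R_{jk}\nu_{i}\,d\sigma$ (the $W\ast C$ flux vanishes identically since every term of $D$ carries a $\nabla f$ contracted into $W$), is algebraic in $Ric$, $R$, $\nabla f$ and hence controllable.

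Two further points. First, even for the controllable flux, "a Qian/maximum-principle argument showing $f$ is proper with controlled growth" is too soft: the proof needs the quantitative two-sided linear growth estimate $c_{1}r(x)-c_{2}\leq u(x)\leq Cr(x)+C'$ for $u=e^{-f/m}$ (the Ranieri--Ribeiro lemma), the identity $m(m-1)|\nabla u|^{2}+Ru^{2}=m\mu$ (giving $R\leq m\mu u^{-2}$, $|\nabla f|\leq Cu^{-1}$, hence $|C_{ijk}R_{jk}\nu_{i}|\leq CR^{2}|\nabla f|\leq Cu^{-5}$, where positive Ricci gives $|Ric|\leq R$), and the Bishop--Gromov bound ${\rm Area}(\partial B_{p}(s))\leq\widetilde{C}s^{n-1}$; the weight must then be pinned down (the paper takes $\phi=u^{-n+\frac{11}{2}}$) so that the flux is $O(s^{-1/2})$. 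Second, your insistence on sign-definiteness is misplaced: with zero radial Weyl the extra terms do not merely have a sign --- they vanish identically (trace-freeness and skew-symmetry of $C$ kill them), and the sign of the final coefficient is irrelevant because the left-hand side is zero; all that matters is that it is nonzero. Indeed in the paper the coefficient is $\frac{2n-11}{4(m+n-2)}$, which is \emph{negative} for $n=4,5$, so an argument requiring $\alpha>0$ would fail exactly in those dimensions.
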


It is well known that $4$-dimensional compact Riemannian manifolds have special behavior. In this case, we are able to conclude that when we restrict the Theorem~\ref{thmA} to the 4-dimensional case, then $M^{4}$ is actually a locally conformally flat manifold. Let us highlight that in \cite{CPW} the authors provided some examples of quasi-Einstein which have ${\rm div}^{4}W=0$ and zero radial Weyl curvature but are not locally conformally flat, cf. \cite[Section 3, Table 2]{CPW}. However, these examples exist only for $n\geq5.$ Hence, after these considerations, we shall apply the Theorem~\ref{thmA} in order to get the following result.

\begin{corollary}\label{corA}
Let $(M^{4},g,f,m>1)$ be a $4$-dimensional noncompact steady quasi-Einstein manifold with positive Ricci curvature such that $f$ has at least one critical point. If in addition $(M,g)$ has zero Weyl radial curvature and satisfies ${\rm div}^{4}W=0$ then it is a locally conformally flat manifold.
\end{corollary}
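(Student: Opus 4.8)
The plan is to feed the conclusion of Theorem~\ref{thmA} into the He--Petersen--Wylie structure theorem and then exploit the fact that dimension four is special. Since the hypotheses of Corollary~\ref{corA} are exactly those of Theorem~\ref{thmA} specialized to $n=4$, the first step is simply to invoke Theorem~\ref{thmA}, obtaining that $M^{4}$ has harmonic Weyl tensor, i.e. ${\rm div}\,W=0$. This is precisely the curvature hypothesis needed to apply the classification of He--Petersen--Wylie.

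Next I would check the remaining hypotheses of Theorem~\ref{cpwylie}. The zero radial Weyl curvature assumption $W(\cdot,\cdot,\cdot,\nabla f)=0$ forces in particular $W(\nabla f,\cdot,\cdot,\nabla f)=0$, and together with ${\rm div}\,W=0$ and $m>1$ this is all that the theorem requires. If simple connectedness is not assumed outright, one passes to the universal cover $\widetilde{M}$, where $f$, the pointwise curvature conditions and the critical point of $f$ all lift; since local conformal flatness is a local property preserved by the covering projection, it suffices to argue on $\widetilde{M}$. Applying Theorem~\ref{cpwylie} then yields that $(M,g)$ is a warped product
$$g=dt^{2}+\psi^{2}(t)\,g_{L},\qquad f=f(t),$$
with $(L,g_{L})$ Einstein of dimension $n-1=3$.

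The decisive point is that a $3$-dimensional Einstein manifold has vanishing Weyl tensor, hence constant sectional curvature; this is exactly where $n=4$ enters, and it is what fails for $n\geq5$, accounting for the non-locally-conformally-flat examples of \cite{CPW}. Introducing $s$ with $ds=\psi^{-1}dt$ gives
$$g=\psi^{2}\left(\psi^{-2}dt^{2}+g_{L}\right)=\psi^{2}\left(ds^{2}+g_{L}\right),$$
so $g$ is pointwise conformal to the product metric $ds^{2}+g_{L}$ on $I\times L$. Since $L$ has constant curvature, this product (a one-dimensional factor times a space form) is locally conformally flat; when $L$ is round, for example, $ds^{2}+g_{L}$ is the usual conformal model of flat space in polar coordinates. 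Because local conformal flatness is a conformal invariant, $g$ is locally conformally flat as well, equivalently $W\equiv0$. I expect the only genuine subtlety to be the passage through the warped-product model across a critical point of $f$, where the fiber may degenerate; this is handled by establishing $W=0$ on the open dense set where $\psi>0$ and extending to all of $M$ by continuity of the Weyl tensor.
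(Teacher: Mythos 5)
Your proof is correct, but it follows a genuinely different route from the paper's. The paper never invokes Theorem~\ref{cpwylie} for this corollary: instead, it observes that once Theorem~\ref{thmA} gives $C_{ijk}=0$, the identity of Lemma~\ref{auxCotton} together with $W_{ijkl}\nabla_{l}f=0$ forces $D_{ijk}=0$, so all three terms in Lemma~\ref{L0} vanish and $M$ is Bach-flat; the conclusion is then outsourced to Theorem~2 of Ranieri--Ribeiro \cite{Ribeiro}, which classifies Bach-flat noncompact steady quasi-Einstein $4$-manifolds (with positive Ricci curvature and a critical point of $f$) as locally conformally flat. Your argument instead runs through the warped-product structure: harmonic Weyl plus zero radial Weyl curvature feeds into Theorem~\ref{cpwylie}, the $3$-dimensional Einstein fiber has constant sectional curvature (this is exactly where $n=4$ enters, correctly contrasted with the $n\geq5$ examples of \cite{CPW}), and a warped product of an interval over a space form is conformal to $ds^{2}+g_{L}$, hence locally conformally flat. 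Two points you rightly flag are genuinely needed in your route and are handled adequately: Corollary~\ref{corA} does not assume simple connectedness, so the passage to the universal cover (where all hypotheses lift, and from which local conformal flatness descends since it is a local property) is not optional; and the warped-product form degenerates at critical points of $f$, so the density-plus-continuity argument for $W\equiv0$ is required. The trade-off: the paper's proof is shorter and treats Corollary~\ref{corA} and Theorem~\ref{thmB} uniformly, but leans on an external classification theorem from \cite{Ribeiro}; yours is more self-contained relative to what is quoted in the paper and makes the dimensional mechanism transparent, at the cost of the covering-space and degeneracy bookkeeping.
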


Finally, as an immediate consequence of Theorems~\ref{cpwylie} and \ref{thmA} we get the following classification result for steady quasi-Einstein manifold.

\begin{theorem}\label{thmB}
Let $(M^{n},g,f,\lambda,m>1),$ $n\geq4,$ be a noncompact, simply connected steady quasi-Einstein manifold with positive Ricci curvature such that $f$ has at least one critical point. If in addition $(M,g)$ has zero Weyl radial curvature and satisfies ${\rm div}^{4}W=0,$ then $(M^{n},g)$ is a warped product with
$$g=dt^{2}+\psi^{2}(t)g_{L}\;\;\;and\;\;\;f=f(t),$$
where $g_{L}$ is an Einstein metric. Moreover, if $\lambda\geq0$ then $(L,g_{L})$ has non-negative Ricci curvature, and if it is Ricci flat, then $\psi$ is constant, i.e., $(M^{n},g)$ is a Riemannian product.
\end{theorem}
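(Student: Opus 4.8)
The plan is to obtain Theorem~\ref{thmB} by composing Theorem~\ref{thmA} with Theorem~\ref{cpwylie}, since the hypotheses appear to have been arranged precisely so that the two results chain together. First I would observe that the standing assumptions of Theorem~\ref{thmB}---that $(M^{n},g,f,m>1)$, $n\geq4$, is a noncompact steady quasi-Einstein manifold with positive Ricci curvature, that $f$ has at least one critical point, that $(M,g)$ has zero radial Weyl curvature, and that ${\rm div}^{4}W=0$---are exactly the hypotheses of Theorem~\ref{thmA}. Invoking that theorem immediately yields that $M^{n}$ has harmonic Weyl tensor.

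Next I would verify that the two pointwise conditions demanded by Theorem~\ref{cpwylie} are in force. The harmonic Weyl tensor condition is now available from the previous step. For the remaining condition $W(\nabla f,\cdot,\cdot,\nabla f)=0$, I would argue that it follows from the zero radial Weyl curvature hypothesis $W(\cdot,\cdot,\cdot,\nabla f)=0$ together with the algebraic symmetries of the Weyl tensor. Concretely, writing the hypothesis in components as $W_{ijkl}\nabla_{l}f=0$ and contracting further with $\nabla_{i}f$ gives $W_{ijkl}\nabla_{i}f\,\nabla_{l}f=0$, which is precisely $W(\nabla f,\cdot,\cdot,\nabla f)=0$.

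With both conditions established, and since $(M,g)$ is complete and simply connected by hypothesis, all the assumptions of Theorem~\ref{cpwylie} are met. Applying it directly yields that $(M,g,f)$ is a warped product of the stated form $g=dt^{2}+\psi^{2}(t)g_{L}$ with $f=f(t)$ and $g_{L}$ Einstein, and the concluding assertions about the sign of the Ricci curvature of $(L,g_{L})$ and the Ricci-flat case (in which $\psi$ is constant and $M^{n}$ is a Riemannian product) are inherited verbatim from Theorem~\ref{cpwylie}.

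Because the statement is an immediate consequence of two already-established theorems, I expect no genuine analytic obstacle; the only point requiring care is confirming that the hypotheses transfer cleanly---in particular that the zero radial curvature condition supplies the specific contraction $W(\nabla f,\cdot,\cdot,\nabla f)=0$ used in Theorem~\ref{cpwylie}, and that completeness and simple connectedness (which are not needed for Theorem~\ref{thmA}) are indeed listed among the standing hypotheses of Theorem~\ref{thmB}.
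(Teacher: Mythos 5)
Your proposal is correct, but it follows a different chain of deductions than the paper's own proof. The paper, after invoking Theorem~\ref{thmA} to conclude that the Cotton tensor vanishes, does not pass directly to Theorem~\ref{cpwylie}: instead it feeds $C_{ijk}=0$ and the zero radial Weyl hypothesis into Lemma~\ref{L0}, all three of whose terms then vanish (the first because Lemma~\ref{auxCotton} gives $D_{ijk}=\tfrac{m}{m+n-2}\left(C_{ijk}+W_{ijkl}\nabla_{l}f\right)=0$), so that $M$ is Bach-flat; the warped product structure is then quoted from Corollary 1 of Ranieri--Ribeiro \cite{Ribeiro}, which classifies Bach-flat noncompact steady quasi-Einstein manifolds. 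Your route --- harmonic Weyl tensor from Theorem~\ref{thmA}, plus the trivial observation that $W_{ijkl}\nabla_{l}f=0$ implies $W_{ijkl}\nabla_{i}f\nabla_{l}f=0$ by contraction, followed by a direct application of Theorem~\ref{cpwylie} --- is shorter, is exactly what the paper's introduction advertises (``an immediate consequence of Theorems~\ref{cpwylie} and \ref{thmA}''), and avoids both Lemma~\ref{L0} and the external appeal to \cite{Ribeiro}, whose corollary in turn ultimately rests on the He--Petersen--Wylie classification anyway. What the paper's detour buys is the intermediate conclusion that $M$ is Bach-flat, which has independent interest as the converse of the Ranieri--Ribeiro implication discussed in the introduction. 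One small point you should add to make the application of Theorem~\ref{cpwylie} airtight: that theorem classifies \emph{nontrivial} $m$-quasi-Einstein metrics, so you must rule out constant $f$; this is immediate here, since a constant $f$ in (\ref{eqqEinstein}) with $\lambda=0$ would force $Ric=0$, contradicting the positive Ricci curvature hypothesis.
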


\section{Preliminaries}
\label{Preliminaries}

Throughout this section we recall some informations and basic results that will be useful in the proof of our main result. Firstly, by the trace of the fundamental equation (\ref{eqqEinstein}), we verify the relation
\begin{equation}\label{eqtrace}
R+\Delta f-\frac{1}{m}|\nabla f|^{2}=\lambda n.
\end{equation} For sake of simplicity, we now rewrite equation (\ref{eqqEinstein}) in the tensorial language as follows
\begin{equation}\label{eq:tensorial}
R_{ij}+\nabla_{i}\nabla_{j}f-\frac{1}{m}\nabla_{i}f\nabla_{j}f=\lambda g_{ij}.
\end{equation}

In order to proceed, we recall two special tensors in the study of curvature for a Riemannian manifold $(M^n,\,g),\,n\ge 3.$  The first one is the Weyl tensor $W$ which is defined by the following decomposition formula
\begin{eqnarray}
\label{weyl}
W_{ijkl}&=&R_{ijkl}-\frac{1}{n-2}\big(R_{ik}g_{jl}+R_{jl}g_{ik}-R_{il}g_{jk}-R_{jk}g_{il}\big) \nonumber\\
 &&+\frac{R}{(n-1)(n-2)}\big(g_{jl}g_{ik}-g_{il}g_{jk}\big),
\end{eqnarray}
where $R_{ijkl}$ stands for the Riemann curvature operator $Rm,$ whereas the second one is the Cotton tensor $C$ given by
\begin{equation}
\label{cotton} \displaystyle{C_{ijk}=\nabla_{i}R_{jk}-\nabla_{j}R_{ik}-\frac{1}{2(n-1)}\big(\nabla_{i}R
g_{jk}-\nabla_{j}R g_{ik}).}
\end{equation} Note that $C_{ijk}$ is skew-symmetric in the first two indices and trace-free in any two indices, i.e.,
$$C_{ijk}=-C_{jik}\;\;\;and\;\;\;g^{ij}C_{ijk}=g^{ik}C_{ijk}=0.$$
These two above tensors are related as follows
\begin{equation}\label{cottonwyel}
\displaystyle{C_{ijk}=-\frac{(n-2)}{(n-3)}\nabla_{l}W_{ijkl},}
\end{equation}provided $n\ge 4.$
Now, we recall a well-known tensor that was introduced by Bach \cite{bach} in the study of conformal relativity, namely, the Bach tensor. On a Riemannian manifold $(M^n,g)$, $n\geq 4,$ the Bach tensor is defined in term of the components of the Weyl tensor $W_{ikjl}$ as follows
\begin{equation}\label{bach}
B_{ij}=\frac{1}{n-3}\nabla_{k}\nabla_{l}W_{ikjl}+\frac{1}{n-2}R_{kl}W_{ikjl},
\end{equation}
while for $n=3$ it is given by
\begin{equation}
\label{bach3} B_{ij}=\nabla_{k}C_{kij}.
\end{equation} We say that $(M^n,g)$ is Bach-flat when $B_{ij}=0.$ It is easy to check that locally conformally flat metrics as well as Einstein metrics are Bach-flat. Moreover, for $4$-dimensional case, we have that, on any compact manifold $(M^{4},g)$, Bach-flat metrics are precisely the critical point of the {\it conformally invariant} functional on the space of the metrics,
$$\mathcal{W}(g)=\int_{M}|W_{g}|^{2}dV_{g},$$
for more details see, for example \cite{besse} or \cite{derd1}.
Furthermore, it is worth reporting here the following interesting formula for the divergence of the Bach tensor
\begin{equation}\label{divBACH}
\nabla_{p}B_{pi}=\frac{n-4}{(n-2)^{2}}C_{ijk}R_{jk}.
\end{equation}
We refer reader to \cite[Lemma 5.1]{CaoChen}, for its proof.

For the purposes of this work, let us recall some well-known properties about quasi-Einstein manifolds. For more detail, see \cite{CSW,CC,CPW,Ribeiro,Wang} and references therein. The next lemma, for instance, can be found in \cite{CSW}.

\begin{lemma}\label{L1}
Let $(M^{n},g,f,\lambda)$ be a quasi-Einstein manifold. Then we have:
$$\frac{1}{2}\nabla_{i}R+\frac{(n-1)}{m}\lambda\nabla_{i}f=\frac{m-1}{m}R_{is}\nabla_{s}f+\frac{1}{m}R\nabla_{i}f.$$
\end{lemma}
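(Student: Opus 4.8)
The plan is to derive the identity by taking the divergence of the fundamental equation and then feeding the outcome back into the fundamental and trace equations until everything reorganizes. Concretely, I would start from the tensorial form (\ref{eq:tensorial}) and apply the divergence operator $\nabla^{j}=g^{jk}\nabla_{k}$ to both sides. The right-hand side contributes nothing, since $\lambda$ is constant and the metric is parallel. For the three terms on the left I would invoke three standard facts: the contracted second Bianchi identity $\nabla^{j}R_{ij}=\tfrac{1}{2}\nabla_{i}R$; the commutation (Ricci) identity, which after tracing yields the Bochner-type formula $\nabla^{j}\nabla_{i}\nabla_{j}f=\nabla_{i}\Delta f+R_{is}\nabla_{s}f$; and the product rule for the last term, $\nabla^{j}(\nabla_{i}f\,\nabla_{j}f)=(\nabla_{i}\nabla_{j}f)\nabla_{j}f+(\Delta f)\nabla_{i}f$. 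Assembling these produces an intermediate identity involving $\nabla_{i}R$, $\nabla_{i}\Delta f$, $R_{is}\nabla_{s}f$, the Hessian contracted with $\nabla f$, and $(\Delta f)\nabla_{i}f$.

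Next I would eliminate every occurrence of the Hessian and of $\Delta f$ using the hypotheses. From (\ref{eq:tensorial}) directly one has $\nabla_{i}\nabla_{j}f=\lambda g_{ij}-R_{ij}+\tfrac{1}{m}\nabla_{i}f\,\nabla_{j}f$, so contracting with $\nabla_{j}f$ rewrites $(\nabla_{i}\nabla_{j}f)\nabla_{j}f$ in terms of $\lambda\nabla_{i}f$, $R_{is}\nabla_{s}f$, and $\tfrac{1}{m}|\nabla f|^{2}\nabla_{i}f$. From the trace equation (\ref{eqtrace}) I can substitute $\Delta f=\lambda n-R+\tfrac{1}{m}|\nabla f|^{2}$ to dispose of the $(\Delta f)\nabla_{i}f$ term. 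The only remaining delicate piece is $\nabla_{i}\Delta f$: I would differentiate (\ref{eqtrace}) to obtain $\nabla_{i}\Delta f=-\nabla_{i}R+\tfrac{1}{m}\nabla_{i}|\nabla f|^{2}$ and then rewrite $\nabla_{i}|\nabla f|^{2}=2(\nabla_{i}\nabla_{j}f)\nabla_{j}f$, applying the same contraction of the fundamental equation once more.

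After these substitutions everything is expressed through the five basic quantities $\nabla_{i}R$, $R_{is}\nabla_{s}f$, $\lambda\nabla_{i}f$, $R\nabla_{i}f$, and $|\nabla f|^{2}\nabla_{i}f$. Collecting coefficients, the cubic terms in $|\nabla f|^{2}\nabla_{i}f$ cancel identically, the $R_{is}\nabla_{s}f$ terms combine to $\tfrac{m-1}{m}R_{is}\nabla_{s}f$, the scalar-curvature terms give $\tfrac{1}{m}R\nabla_{i}f$, and the $\lambda$ terms collapse to $-\tfrac{(n-1)}{m}\lambda\nabla_{i}f$; after transposing, one recovers exactly the asserted formula. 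I expect the main obstacle to be purely organizational rather than conceptual: keeping the sign conventions straight in the Ricci identity so that the curvature term in the Bochner formula carries the correct sign, and carefully tracking the several contributions of $|\nabla f|^{2}\nabla_{i}f$ so that their cancellation is transparent rather than accidental.
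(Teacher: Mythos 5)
Your proof is correct: taking the divergence of the fundamental equation, applying the contracted second Bianchi identity and the Bochner commutation formula $\nabla^{j}\nabla_{i}\nabla_{j}f=\nabla_{i}\Delta f+R_{is}\nabla_{s}f$, and then eliminating the Hessian and $\Delta f$ via the fundamental and trace equations does yield, after the coefficient bookkeeping you describe (including the clean cancellation of the $|\nabla f|^{2}\nabla_{i}f$ terms), exactly the stated identity. Note that the paper offers no proof of this lemma at all---it simply cites Case--Shu--Wei---and your argument is precisely the standard derivation given in that reference, so there is nothing to compare beyond saying your proposal fills in what the paper leaves as a citation.
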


Now, we remembered the following 3-tensor defined in \cite{CC},
\begin{eqnarray}\label{TensorD}
D_{ijk}&=&\frac{1}{n-2}(R_{jk}\nabla_{i}f-R_{ik}\nabla_{j}f)+\frac{1}{(n-1)(n-2)}(g_{jk}R_{is}\nabla_{s}f-g_{ik}R_{js}\nabla_{s}f)\nonumber\\
&&-\frac{R}{(n-1)(n-2)}(g_{jk}\nabla_{i}f-g_{ik}\nabla_{j}f).
\end{eqnarray}
Note that $D_{ijk}$ has the same symmetry properties as the Cotton tensor. Still in \cite{CC}, the authors showed that $D_{ijk}$ is related to the Cotton tensor $C_{ijk}$ and the Weyl tensor $W_{ijkl}$.
Since its proof is very short, we include here for the sake of completeness. More precisely, we have the following identity.

\begin{lemma}(Chen-He, \cite{CC})\label{auxCotton}
Let $(M^{n},g,f)$ be a quasi-Einstein manifold. Then the following identity holds:
$$C_{ijk}=\frac{m+n-2}{m}D_{ijk}-W_{ijkl}\nabla_{l}f.$$
\end{lemma}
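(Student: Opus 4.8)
The plan is to derive the identity directly from the fundamental equation by differentiating it once and commuting covariant derivatives. First I would rewrite the tensorial equation \eqref{eq:tensorial} to isolate the Hessian,
$$\nabla_i\nabla_j f = \lambda g_{ij} - R_{ij} + \frac{1}{m}\nabla_i f\,\nabla_j f,$$
apply $\nabla_k$, and then antisymmetrize in the first two indices. Since the Hessian is symmetric, the terms of the form $\frac{1}{m}(\nabla_i\nabla_j f)\nabla_k f$ cancel under the antisymmetrization, leaving
$$\nabla_i\nabla_j\nabla_k f - \nabla_j\nabla_i\nabla_k f = \nabla_j R_{ik} - \nabla_i R_{jk} + \frac{1}{m}\big(\nabla_j f\,\nabla_i\nabla_k f - \nabla_i f\,\nabla_j\nabla_k f\big).$$

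Next I would exploit the two natural ways of reading the left-hand side. On one hand, the Ricci commutation formula expresses it through the Riemann tensor contracted with $\nabla f$, that is, a term of the form $R_{ijkl}\nabla_l f$, the precise sign being dictated by the curvature conventions fixed by \eqref{weyl}. On the other hand, I substitute the isolated Hessian back into the remaining first-order terms; here the cubic terms $\frac{1}{m^2}\nabla_i f\,\nabla_j f\,\nabla_k f$ cancel, and what survives is a combination of $\lambda(g_{ik}\nabla_j f - g_{jk}\nabla_i f)$ and $\frac{1}{m}(R_{jk}\nabla_i f - R_{ik}\nabla_j f)$. Equating the two readings produces an identity relating $R_{ijkl}\nabla_l f$, the antisymmetrized derivative $\nabla_i R_{jk} - \nabla_j R_{ik}$, and purely algebraic Ricci-times-gradient terms.

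The final step is to recognize the two target tensors inside this identity. I would insert the definition \eqref{cotton} to replace $\nabla_i R_{jk} - \nabla_j R_{ik}$ by $C_{ijk}$ plus the scalar-curvature terms $\frac{1}{2(n-1)}(g_{jk}\nabla_i R - g_{ik}\nabla_j R)$, and expand $R_{ijkl}\nabla_l f$ through the Weyl decomposition \eqref{weyl}, which is exactly what generates the term $-W_{ijkl}\nabla_l f$ together with the Ricci and scalar pieces $\frac{1}{n-2}(R_{jk}\nabla_i f - R_{ik}\nabla_j f)$, $\frac{1}{(n-1)(n-2)}(g_{jk}R_{is}\nabla_s f - g_{ik}R_{js}\nabla_s f)$, and the $R$-multiple of $(g_{jk}\nabla_i f - g_{ik}\nabla_j f)$. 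To remove the explicit $\nabla R$ that remains, I would invoke Lemma \ref{L1} to trade $\nabla_i R$ for $R_{is}\nabla_s f$, $R\nabla_i f$ and $\lambda\nabla_i f$. After this substitution the $\lambda$-terms cancel identically, consistent with the absence of $\lambda$ in the statement, and the surviving algebraic terms reorganize into precisely $\frac{m+n-2}{m}D_{ijk}$.

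The hard part will be the coefficient and sign bookkeeping in this last assembly. One must verify, for instance, that the $\frac{1}{m}$-term coming from the Hessian substitution and the $\frac{1}{n-2}$-term coming from the Weyl expansion combine to the coefficient $\frac{m+n-2}{m(n-2)}$ in front of $R_{jk}\nabla_i f - R_{ik}\nabla_j f$, and that the $R_{is}\nabla_s f\,g_{jk}$ terms produced by the Weyl decomposition combine with those produced by Lemma \ref{L1} to yield the coefficient $\frac{m+n-2}{m(n-1)(n-2)}$. This requires keeping the Ricci-identity sign convention strictly consistent with the convention implicit in \eqref{weyl}; once that is fixed, the algebra closes and the stated identity follows.
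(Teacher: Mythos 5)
Your proposal is correct and follows essentially the same route as the paper's proof: both substitute the fundamental equation (\ref{eq:tensorial}) into the Cotton tensor definition (\ref{cotton}), apply the Ricci commutation identity to the third derivatives of $f$, substitute the Hessian back in (with the cubic $\nabla f$ terms cancelling), invoke Lemma~\ref{L1} to eliminate $\nabla R$ along with the $\lambda$-terms, and finally expand $R_{ijkl}\nabla_{l}f$ via the Weyl decomposition (\ref{weyl}) to assemble $-W_{ijkl}\nabla_{l}f+\frac{m+n-2}{m}D_{ijk}$. Your coefficient checks (e.g.\ $\frac{1}{m}+\frac{1}{n-2}=\frac{m+n-2}{m(n-2)}$ and $\frac{1}{n-2}-\frac{m-1}{m(n-1)}=\frac{m+n-2}{m(n-1)(n-2)}$) are exactly the bookkeeping the paper's computation carries out, so the argument closes as claimed.
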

\begin{proof}
First of all, substitute (\ref{eq:tensorial}) into (\ref{cotton}) to deduce
\begin{eqnarray*}
C_{ijk}&=&(\nabla_{j}\nabla_{i}\nabla_{k}f-\nabla_{i}\nabla_{j}\nabla_{k}f)+\frac{1}{m}(\nabla_{j}f\nabla_{i}\nabla_{k}f-\nabla_{i}f\nabla_{j}\nabla_{k}f)\\
&&-\frac{1}{2(n-1)}(\nabla_{i}R g_{jk}-\nabla_{j}Rg_{ik})\\
&=&-R_{ijkl}\nabla_{l}f+\frac{1}{m}(\nabla_{j}f\nabla_{i}\nabla_{k}f-\nabla_{i}f\nabla_{j}\nabla_{k}f)\\
&&-\frac{1}{2(n-1)}(\nabla_{i}R g_{jk}-\nabla_{j}Rg_{ik}),
\end{eqnarray*}
where in the last step we use the Ricci identities.

Next, by Eq. (\ref{eq:tensorial}) again and Lemma~\ref{L1}, we have that
\begin{eqnarray*}
C_{ijk}&=&-R_{ijkl}\nabla_{l}f+\frac{1}{m}(R_{jk}\nabla_{i}f-R_{ik}\nabla_{j}f)\\
&&-\frac{\lambda}{m}(\nabla_{i}fg_{jk}-\nabla_{j}fg_{ik})-\frac{1}{2(n-1)}(\nabla_{i}R g_{jk}-\nabla_{j}Rg_{ik})\\
&=&-R_{ijkl}\nabla_{l}f+\frac{1}{m}(R_{jk}\nabla_{i}f-R_{ik}\nabla_{j}f)\\
&&-\frac{m-1}{m(n-1)}(R_{il}\nabla_{l}fg_{jk}-R_{jl}\nabla_{l}fg_{ik})-\frac{R}{m(n-1)}(\nabla_{i}f g_{jk}-\nabla_{j}fg_{ik}).
\end{eqnarray*}
Finally, substituting (\ref{weyl}) in the above expression, and after some computation, we get
\begin{eqnarray*}
C_{ijk}&=&-W_{ijkl}\nabla_{l}f+D_{ijk}+\frac{1}{m}(R_{jk}\nabla_{i}f-R_{ik}\nabla_{j}f)\\
&&+\frac{1}{m(n-1)}(R_{il}\nabla_{l}fg_{jk}-R_{jl}\nabla_{l}fg_{ik})-\frac{R}{m(n-1)}(\nabla_{i}f g_{jk}-\nabla_{j}fg_{ik})\\
&=&-W_{ijkl}\nabla_{l}f+D_{ijk}+\frac{n-2}{m}D_{ijk}\\
&=&-W_{ijkl}\nabla_{l}f+\frac{m+n-2}{m}D_{ijk},
\end{eqnarray*}
as desired.
\end{proof}

Under these notations we get the following lemma.

\begin{lemma}\label{L0}
Let $(M^{n},g,f,\lambda),$ $n\geq4,$ be a quasi-Einstein manifold. Then we have:
$$B_{ij}=\frac{m+n-2}{n(n-2)}\nabla_{k}D_{kij}+\frac{n-3}{(n-2)^{2}}C_{kji}\nabla_{k}f+\frac{1}{m(n-2)}W_{ikjl}\nabla_{k}f\nabla_{l}f.$$
\end{lemma}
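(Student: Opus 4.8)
The plan is to start from the definition of the Bach tensor in \eqref{bach} and systematically trade derivatives of the Weyl tensor first for the Cotton tensor and then, via Lemma~\ref{auxCotton}, for the $D$-tensor. Since $n\geq 4$, the contracted second Bianchi identity \eqref{cottonwyel} gives $\nabla_{l}W_{ikjl}=-\frac{n-3}{n-2}C_{ikj}$, so the leading double-divergence term collapses to a single divergence of the Cotton tensor:
$$\frac{1}{n-3}\nabla_{k}\nabla_{l}W_{ikjl}=-\frac{1}{n-2}\nabla_{k}C_{ikj}.$$
Hence $B_{ij}=-\frac{1}{n-2}\nabla_{k}C_{ikj}+\frac{1}{n-2}R_{kl}W_{ikjl}$, which isolates one Cotton divergence plus a purely algebraic Ricci--Weyl contraction that I will keep track of until it cancels later.

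Next I would substitute Lemma~\ref{auxCotton} in the form $C_{ikj}=\frac{m+n-2}{m}D_{ikj}-W_{ikjl}\nabla_{l}f$ and differentiate in $k$. The term $\nabla_{k}D_{ikj}$ is, up to the skew-symmetry $D_{ikj}=-D_{kij}$, exactly the divergence $\nabla_{k}D_{kij}$ appearing in the statement, and collecting the resulting constant produces the first summand. The Leibniz rule splits $\nabla_{k}(W_{ikjl}\nabla_{l}f)$ into $(\nabla_{k}W_{ikjl})\nabla_{l}f$ and $W_{ikjl}\nabla_{k}\nabla_{l}f$. For the first piece I would use the pair symmetry $W_{ikjl}=W_{jlik}$ to move the contracted index $k$ into the final slot and invoke \eqref{cottonwyel} once more, producing a term proportional to $C_{jli}\nabla_{l}f$; after relabeling and the skew-symmetry $C_{jki}=-C_{kji}$ this matches the middle summand $\frac{n-3}{(n-2)^{2}}C_{kji}\nabla_{k}f$.

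The decisive step is the second piece, $W_{ikjl}\nabla_{k}\nabla_{l}f$, where I would insert the fundamental equation \eqref{eq:tensorial} in the form $\nabla_{k}\nabla_{l}f=\lambda g_{kl}-R_{kl}+\frac{1}{m}\nabla_{k}f\nabla_{l}f$. Because the Weyl tensor is totally trace-free, the contraction $W_{ikjl}g_{kl}$ vanishes, so the $\lambda$-term disappears outright; the remaining $-W_{ikjl}R_{kl}$ cancels precisely against the algebraic Ricci--Weyl term $\frac{1}{n-2}R_{kl}W_{ikjl}$ already present in $B_{ij}$; and the $\frac{1}{m}\nabla_{k}f\nabla_{l}f$ contribution supplies the last summand $\frac{1}{m(n-2)}W_{ikjl}\nabla_{k}f\nabla_{l}f$. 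Assembling the three surviving contributions then yields the stated formula.

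I expect the main obstacle to be one of disciplined bookkeeping rather than of any deep idea. One must track the many index symmetries simultaneously --- $W$, $C$, and $D$ are all skew in their first two indices, $W$ is trace-free and satisfies the pair-exchange symmetry, and $C$, $D$ are trace-free --- so that each contraction lands in the slot matching the target expression and the signs from the antisymmetries are handled consistently. The two verifications that make the identity work are that the $\lambda g_{kl}$ contraction is annihilated by trace-freeness and that the two Ricci--Weyl terms cancel exactly; both follow structurally, leaving only routine collection of the numerical constants.
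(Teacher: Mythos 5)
Your proposal is correct and takes essentially the same route as the paper's own proof: both reduce the Bach tensor via \eqref{cottonwyel} to $(n-2)B_{ij}=\nabla_{k}C_{kij}+W_{ikjl}R_{kl}$, then use Lemma~\ref{auxCotton} to trade the Cotton tensor for $D$ and $W(\cdot,\cdot,\cdot,\nabla f)$, expand $\nabla_{k}(W_{ikjl}\nabla_{l}f)$ by Leibniz with the fundamental equation \eqref{eq:tensorial}, and exploit trace-freeness of $W$ together with the Ricci--Weyl cancellation --- the paper merely performs the same steps in the opposite order (computing the auxiliary identity for $\nabla_{k}(W_{ikjl}\nabla_{l}f)$ first, then assembling). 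One remark: both your computation and the paper's own proof yield the coefficient $\frac{m+n-2}{m(n-2)}$ in front of $\nabla_{k}D_{kij}$, so the $\frac{m+n-2}{n(n-2)}$ printed in the lemma statement is a typo ($n$ should be $m$ in the denominator).
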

\begin{proof}
To beginning with, we use (\ref{eq:tensorial}) and (\ref{cottonwyel}) to infer
\begin{eqnarray*}
\nabla_{k}(W_{ikjl}\nabla_{l}f)&=&\nabla_{k}W_{ikjl}\nabla_{l}f+W_{ikjl}\left(-R_{kl}+\frac{1}{m}\nabla_{k}\nabla_{l}f+\lambda g_{kl}\right)\\
&=&\frac{n-3}{n-2}C_{lji}\nabla_{l}f-W_{ikjl}R_{kl}+\frac{1}{m}W_{ikjl}\nabla_{k}f\nabla_{l}f.
\end{eqnarray*}

Then, from Lemma~\ref{auxCotton}, we immediately obtain
\begin{eqnarray}\label{auxB}
\nabla_{k}C_{kij}+W_{ikjl}R_{kl}&=&\frac{m+n-2}{m}\nabla_{k}D_{kij}+\frac{n-3}{n-2}C_{lji}\nabla_{l}f\nonumber\\
&&+\frac{1}{m}W_{ikjl}\nabla_{k}f\nabla_{l}f.
\end{eqnarray}

Therefore, substituting (\ref{cottonwyel}) in (\ref{bach}) we get
\begin{equation}\label{bachAux}
(n-2)B_{ij}=\nabla_{k}C_{kij}+W_{ikjl}R_{kl},
\end{equation}
which combined with (\ref{auxB}) gives the requested result.
\end{proof}

Now, restricting to the steady quasi-Einstein case, we can take $u=e^{-\frac{f}{m}}$ to deduce, after a straightforward computation, the following identities
$$\nabla_{i}u=-\frac{u}{m}\nabla_{i} f$$
and
$$\nabla_{i}\nabla_{j}u=-\frac{u}{m}\left(\nabla_{i}\nabla_{j}f-\frac{1}{m}\nabla_{i}f\nabla_{j}f\right)=\frac{u}{m}R_{ij}.$$
Hence, taking the trace in the last equality, it is easy to verify that
\begin{equation}\label{deltau}
\Delta u=\frac{1}{u}|\nabla u|^{2}-\frac{u}{m}\Delta f=\frac{u}{m}R.
\end{equation}
Moreover, substituting (\ref{mu}) into (\ref{deltau}), we get
\begin{equation}\label{auxnormau}
m(m-1)|\nabla u|^{2}+Ru^{2}=m\mu.
\end{equation}

We finalize this section with a lemma that will be very useful for our purposes, whose its proof can be check in \cite{Ribeiro}.

\begin{lemma}\label{estimateu}
Let $(M^{n},g,f,m>1)$ be a complete (noncompact) steady quasi Einstein manifold with positive Ricci curvature and such that $f$ has at last one critical point. Then there exist positive constant $c_{1}$ and $c_{2}$ such that the function $u=e^{-\frac{f}{m}}$ satisfies the estimates
$$c_{1}r(x)-c_{2}\leq u(x)\leq \sqrt{\frac{\mu}{m-1}}r(x)+|u(p)|,$$
where $r(x)=d(p,x)$ is the distance function from some fixed critical point $p\in M,$ $c_{1}$ and $c_{2}$ are positive constants depending only on $n$ and the geometry of $g_{ij}$ on the unit ball $B_{p}(1).$
\end{lemma}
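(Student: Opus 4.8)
The plan is to derive both inequalities from the single algebraic identity \eqref{auxnormau}, namely $m(m-1)|\nabla u|^{2}+Ru^{2}=m\mu$, together with the Hessian formula $\nabla_{i}\nabla_{j}u=\frac{u}{m}R_{ij}$ valid in the steady case. The observation organizing everything is that, because $u=e^{-f/m}>0$ and $\mathrm{Ric}>0$, this Hessian formula says $u$ is a strictly (geodesically) convex function whose unique critical point is the prescribed critical point $p$ of $f$, and hence its global minimum.

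First I would dispose of the upper bound, which is the softer half. Since $\mathrm{Ric}>0$ forces $R>0$, and $u>0$, identity \eqref{auxnormau} gives $m(m-1)|\nabla u|^{2}\le m\mu$, hence the uniform gradient estimate $|\nabla u|\le\sqrt{\mu/(m-1)}$. The same identity evaluated at $p$, where $\nabla u(p)=-\tfrac{u}{m}\nabla f(p)=0$, shows $\mu=R(p)u(p)^{2}/m>0$, so the square root is legitimate. Integrating this Lipschitz bound along a unit-speed minimizing geodesic $\gamma$ from $p$ to $x$ yields $u(x)-u(p)=\int_{0}^{r(x)}\langle\nabla u,\gamma'\rangle\,dt\le\sqrt{\mu/(m-1)}\,r(x)$, which is exactly the claimed estimate $u(x)\le\sqrt{\mu/(m-1)}\,r(x)+|u(p)|$.

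For the lower bound the key structural input is the strict convexity together with a \emph{quantitative} version of it on the fixed ball. By continuity and compactness, on $\overline{B_{p}(1)}$ there are positive constants $u_{0}$ and $\rho$ with $u\ge u_{0}$ and $\mathrm{Ric}(v,v)\ge\rho|v|^{2}$; setting $\kappa=u_{0}\rho/m>0$, the Hessian formula gives $\mathrm{Hess}\,u\ge\kappa\,g$ on $\overline{B_{p}(1)}$, a constant depending only on $m$ and the geometry of $g$ on $B_{p}(1)$. Now fix $x$ with $r(x)\ge1$, let $\gamma$ be a unit-speed minimizing geodesic from $p$ to $x$, and set $\phi(t)=u(\gamma(t))$. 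Then $\phi$ is convex with $\phi'(0)=\langle\nabla u(p),\gamma'(0)\rangle=0$, and on $[0,1]$ we have $\phi''(t)=\mathrm{Hess}\,u(\gamma',\gamma')\ge\kappa$, so $\phi'(1)\ge\kappa$. Since $\mathrm{Hess}\,u>0$ globally, $\phi'$ is nondecreasing, whence $\phi'(t)\ge\kappa$ for all $t\ge1$; integrating from $1$ to $r(x)$ gives $u(x)=\phi(r(x))\ge\phi(1)+\kappa(r(x)-1)\ge u(p)+\kappa(r(x)-1)\ge\kappa r(x)-\kappa$. Taking $c_{1}=c_{2}=\kappa$ (the case $r(x)<1$ being trivial, since then $u\ge u_{0}>0>\kappa r(x)-\kappa$) yields $u(x)\ge c_{1}r(x)-c_{2}$.

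The step I expect to be the main obstacle is precisely this lower bound: positivity of $\mathrm{Ric}$ is \emph{not} assumed to be uniform, so one cannot control $\phi''$ from below far out along $\gamma$, and a naive attempt to bound $\mathrm{Ric}(\gamma',\gamma')$ on all of $[0,r(x)]$ fails. The device that circumvents this is to extract quantitative convexity only on the fixed ball $\overline{B_{p}(1)}$, producing a definite slope $\phi'(1)\ge\kappa$, and then to propagate that slope to infinity using nothing more than the global \emph{nonstrict} convexity $\phi''\ge0$ — which is automatic from $\mathrm{Hess}\,u=\tfrac{u}{m}\mathrm{Ric}\ge0$. I would double-check that the constants so produced genuinely depend only on $m$ and the local geometry on $B_{p}(1)$, as asserted, which is clear from $\kappa=u_{0}\rho/m$.
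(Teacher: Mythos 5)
Your proof is correct and is essentially the argument the paper itself defers to: the paper does not prove this lemma but cites \cite{Ribeiro}, where the proof runs exactly along your lines --- the upper bound from the gradient estimate $|\nabla u|\leq\sqrt{\mu/(m-1)}$ implied by (\ref{auxnormau}) (with $\mu>0$ checked at the critical point), and the lower bound from convexity of $u$ via $\mathrm{Hess}\,u=\frac{u}{m}\mathrm{Ric}\geq0$, made quantitative on the compact ball $\overline{B_{p}(1)}$ and then propagated along minimizing geodesics using only nonstrict convexity. Your closing caveat is also apt: the constant $\kappa=u_{0}\rho/m$ depends on $m$ and on $u$ (i.e., on $f$) over $B_{p}(1)$ in addition to the metric, a dependence the lemma's phrasing glosses over but which is inherited from the analogous soliton estimates it imitates.
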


\section{ Quasi-Einstein manifold with ${\rm div}^{4}W=0$}

In this section we shall prove Theorems \ref{thmA} and \ref{thmB} announced in Section \ref{intro}. To do this, under our assumption, we shall first derive a useful integral formula for the norm square of the Cotton tensor for steady quasi-Einstein metric with zero radial Weyl tensor. This formula plays an important role in our conclusion of the desired theorems.

\subsection{Proof of the Theorem~\ref{thmA}}
\begin{proof}
Firstly, by direct computation using (\ref{bachAux}), we get
\begin{eqnarray*}
(n-2){\rm div}^{2}B&=&\nabla_{j}(\nabla_{i}\nabla_{k}C_{kij}+\nabla_{i}W_{ikjl}R_{kl}+W_{ijkl}\nabla_{i}R_{kl})\\
&=&\nabla_{j}(\nabla_{i}\nabla_{k}C_{kij}+\nabla_{i}W_{ikjl}R_{kl}+\frac{1}{2}W_{ijkl}(\nabla_{i}R_{kl}-\nabla_{k}R_{il}).
\end{eqnarray*}
In the sequel, using the expressions (\ref{cotton}) and (\ref{cottonwyel}) and the fact that the Weyl tensor $W$ has null trace, we arrive at
\begin{eqnarray}\label{div2B}
(n-2){\rm div}^{2}B&=&{\rm div}^{3}C+\nabla_{j}\left(-\frac{n-3}{n-2}C_{ljk}R_{kl}+\frac{1}{2}W_{ikjl}C_{ikl}\right).
\end{eqnarray}

Next, consider a critical point $p\in M$ and take the ball $B_{p}(s)$ of radius s centered at $p.$ Also, let $\phi:M\rightarrow\mathbb{R}$ be a smooth test function defined on $M.$ Thus, integrating (\ref{div2B}) by parts and changing the indices conveniently, we obtain
\begin{eqnarray*}
(n-2)\int_{B_{p}(s)}\phi {\rm div}^{2}BdV_{g}&=&-\frac{n-3}{n-2}\int_{B_{p}(s)}C_{ijk}\nabla_{i}\phi R_{jk}dV_{g}+\frac{n-3}{n-2}\int_{\partial B_{p}(s)}\phi C_{ijk}R_{jk}\nu_{i}d\sigma\\
&&+\frac{1}{2}\int_{B_{p}(s)}W_{ijkl}\nabla_{l}\phi C_{ijk}dV_{g}-\frac{1}{2}\int_{\partial B_{p}(s)}\phi W_{ijkl}C_{ijk}\nu_{l}d\sigma\\
&&+\int_{B_{p}(s)}\phi {\rm div}^{3}CdV_{g},
\end{eqnarray*}
where $\nu$ is the outward unit normal on $\partial B_{p}(s)$ and $d\sigma$ is its volume form.

Proceeding, by Lemma~\ref{auxCotton} together with fact which our manifold satisfies zero radial Weyl curvature (i.e., $W_{ijkl}\nabla_{l}f=0$), we get
\begin{eqnarray*}
(n-2)\int_{B_{p}(s)}\phi {\rm div}^{2}BdV_{g}&=&-\frac{n-3}{n-2}\int_{B_{p}(s)}C_{ijk}\nabla_{i}\phi R_{jk}dV_{g}+\frac{n-3}{n-2}\int_{\partial B_{p}(s)}\phi C_{ijk}R_{jk}\nu_{i}d\sigma\\
&&+\frac{m+n-2}{2m}\Big\{\int_{B_{p}(s)}W_{ijkl}\nabla_{l}\phi D_{ijk}dV_{g}\\
&&-\int_{\partial B_{p}(s)}\phi W_{ijkl}D_{ijk}\nu_{l}d\sigma\Big\}+\int_{B_{p}(s)}\phi {\rm div}^{3}CdV_{g}.
\end{eqnarray*}
So, as the tensor $D$ has the same skew-symmetric of the Cotton tensor, we arrive at
\begin{eqnarray}\label{EqAdiv2aux}
(n-2)\int_{B_{p}(s)}\phi {\rm div}^{2}BdV_{g}&=&-\frac{n-3}{n-2}\int_{B_{p}(s)}C_{ijk}\nabla_{i}\phi R_{jk}dV_{g}+\frac{n-3}{n-2}\int_{\partial B_{p}(s)}\phi C_{ijk}R_{jk}\nu_{i}d\sigma\nonumber\\
&&+\frac{m+n-2}{m(n-2)}\Big\{\int_{B_{p}(s)}W_{ijkl}\nabla_{l}\phi R_{jk}\nabla_{i}fdV_{g}\nonumber\\
&&-\int_{\partial B_{p}(s)}\phi W_{ijkl}R_{jk}\nabla_{i}f\nu_{l}d\sigma\Big\}+\int_{B_{p}(s)}\phi {\rm div}^{3}CdV_{g}\nonumber\\
&=&-\frac{n-3}{n-2}\int_{B_{p}(s)}C_{ijk}\nabla_{i}\phi R_{jk}dV_{g}+\frac{n-3}{n-2}\int_{\partial B_{p}(s)}\phi C_{ijk}R_{jk}\nu_{i}d\sigma\nonumber\\
&&+\int_{B_{p}(s)}\phi {\rm div}^{3}CdV_{g},
\end{eqnarray}
where in the last step we use the condition $W_{ijkl}\nabla_{l}f=0$ again.

On the other hand, multiplying the equation (\ref{divBACH}) by $\phi$ and integrating by parts, we deduce
\begin{eqnarray}\label{EqBdiv2aux}
(n-2)\int_{B_{p}(s)}\phi {\rm div}^{2}BdV_{g}&=&\frac{n-4}{n-2}\int_{B_{p}(s)}\phi\nabla_{i}(C_{ijk}R_{jk})dV_{g}\nonumber\\
&=&\frac{n-4}{n-2}\left\{\int_{B_{p}(s)}\nabla_{i}(\phi C_{ijk}R_{jk})dV_{g}-\int_{B_{p}(s)}\nabla_{i}\phi C_{ijk}R_{jk}dV_{g}\right\}\nonumber\\
&=&\frac{n-4}{n-2}\int_{\partial B_{p}(s)}\phi C_{ijk}R_{jk}\nu_{i}d\sigma-\frac{n-4}{n-2}\int_{B_{p}(s)} C_{ijk}\nabla_{i}\phi R_{jk}dV_{g}.
\end{eqnarray}
Thus, comparing (\ref{EqAdiv2aux}) with (\ref{EqBdiv2aux}) and using the Lemma~\ref{auxCotton}, we have
\begin{eqnarray}\label{intCaux}
\int_{B_{p}(s)}\nabla_{i}\phi C_{ijk}R_{jk}dV_{g}&=&\int_{\partial B_{p}(s)}\phi C_{ijk}R_{jk}\nu_{i}d\sigma+(n-2)\int_{B_{p}(s)}\phi {\rm div}^{3}CdV_{g}\nonumber\\
&=&\frac{m+n-2}{m}\int_{\partial B_{p}(s)}\phi D_{ijk}R_{jk}\nu_{i}d\sigma+(n-2)\int_{B_{p}(s)}\phi {\rm div}^{3}CdV_{g}.
\end{eqnarray}
Now, from the definition of the tensor $D_{ijk},$ it is easy to check that
\begin{eqnarray*}
D_{ijk}R_{jk}\nu_{i}&=&\frac{1}{n-2}|Ric|^{2}\langle \nabla f,\nu\rangle-\frac{n}{(n-1)(n-2)}R_{ik}R_{jk}\nabla_{i}f\nu_{j}\\
&&+\frac{2R}{(n-1)(n-2)}R_{ij}\nabla_{i}f\nu_{j}-\frac{R^{2}}{(n-1)(n-2)}\langle\nabla f,\nu\rangle,
\end{eqnarray*}
which joint with fact that $|R_{ij}|\leq R$ (this follows directly from our hypothesis which $M$ has positive Ricci curvature), allows us to conclude the following estimate
\begin{equation}\label{estD}
|D_{ijk}R_{jk}\nu_{i}|\leq CR^{2}|\nabla f|,
\end{equation}
for some constant $C>0.$ Furthermore, since we are working in the steady case, the scalar curvature is nonnegative (cf. \cite{Wang} for more details) and consequently, from (\ref{auxnormau}) we get
$$|\nabla u|^{2}\leq\frac{\mu}{m-1}\;\;\;\;and\;\;\;\;u^{2}R\leq m\mu.$$
Hence, (\ref{estD}) becomes
$$|D_{ijk}R_{jk}\nu_{i}|\leq C\frac{1}{u^{5}},$$
where we denote the same constant for simplicity.

Next, consider $s$ sufficiently large such that $c_{1}s-c_{2}$ is a positive number, where $c_{1}$ and $c_{2}$ are constant provided in the Lemma~\ref{estimateu}. Thus, by the inequality obtained in Lemma~\ref{estimateu} and taking our test function as $\phi=u^{-n+\frac{11}{2}},$ we deduce
\begin{eqnarray}\label{estBG}
\left|\int_{\partial B_{p}(s)}\phi D_{ijk}R_{jk}\nu_{i}d\sigma\right|&\leq&\int_{\partial B_{p}(s)}u^{-n+\frac{11}{2}} |D_{ijk}R_{jk}\nu_{i}|d\sigma\nonumber\\
&\leq& C\int_{\partial B_{p}(s)}\frac{1}{u^{n-\frac{1}{2}}}d\sigma\nonumber\\
&\leq&\frac{C}{(c_{1}s-c_{2})^{n-\frac{1}{2}}}Area(\partial B_{p}(s)).
\end{eqnarray}
Therefore, since we are assuming positive Ricci curvature, it follows from the well-known Bishop-Gromov's theorem, that
$$Area(\partial B_{p}(s))\leq\widetilde{C}s^{n-1},$$
where $\widetilde{C}$ is a positive constant.
Thus, returning to inequality (\ref{estBG}), we deduce
\begin{eqnarray*}
\left|\int_{\partial B_{p}(s)}\phi D_{ijk}R_{jk}\nu_{i}d\sigma\right|&\leq&C\left(\frac{s}{c_{1}s-c_{2}}\right)^{n-\frac{1}{2}}\frac{1}{\sqrt{s}},
\end{eqnarray*}
again we consider the same constant. In particular, if we take $s\rightarrow\infty$ in (\ref{intCaux}), then it is easy to verify that
\begin{eqnarray*}
\int_{M}u^{-n+\frac{11}{2}} {\rm div}^{3}CdV_{g}&=&\frac{2n-11}{2m(n-2)}\int_{M}u^{-n+\frac{11}{2}} C_{ijk}\nabla_{i}fR_{jk}dV_{g}\\
&=&\frac{2n-11}{4m(n-2)}\int_{M}u^{-n+\frac{11}{2}} C_{ijk}(\nabla_{i}fR_{jk}-\nabla_{j}fR_{ik})dV_{g}\\
&=&\frac{2n-11}{4m}\int_{M}u^{-n+\frac{11}{2}} C_{ijk}D_{ijk}dV_{g},
\end{eqnarray*}
where we use the skew-symmetry of Cotton tensor jointly with the definition of the auxiliary tensor $D$. So, we may use Lemma~\ref{auxCotton} in order to get the following integral formula for steady quasi-Einstein metric with zero radial Weyl tensor,
\begin{eqnarray}\label{intID}
\int_{M}u^{-n+\frac{11}{2}} {\rm div}^{3}CdV_{g}=\frac{(2n-11)}{4(m+n-2)}\int_{M}u^{-n+\frac{11}{2}} |C_{ijk}|^{2}dV_{g}.
\end{eqnarray}

Finally, since we are suppose that $M$ has fourth order divergence-free Weyl tensor (i.e., ${\rm div}^{4}W=0$), it follows from (\ref{intID}) that $M$ has null Cotton tensor, which is equivalent to say, using Eq. (\ref{cottonwyel}), that $M$ has harmonic Weyl tensor. This is what we wanted to prove.
\end{proof}

\subsection{Conclusion of the proof of Corollary~\ref{corA} and Theorem~\ref{thmB}}
\begin{proof}
Under the conditions of Theorem 2, since we already know that $M$ has null Cotton tensor, then by Lemma~\ref{L0} we get immediately that $M$ is a Bach-flat manifold and consequently, the Corollary~\ref{corA} and Theorem~\ref{thmB} follows from Theorem 2 and Corollary 1 in \cite{Ribeiro}, respectively.
\end{proof}


\begin{thebibliography}{BB}

\bibitem{bach} Bach, R.: Zur Weylschen Relativi\"atstheorie und der Weylschen Erweiterung des Kr\"ummun\-gs\-ten\-sor\-be\-griffs. Math. Z. 9 (1921)
110-135.

\bibitem{Bakry} Bakry, D. and Ledoux, M.: Sobolev inequalities and Myers diameter theorem for an abstract Markov generator. Duke Math. J. 85 (1996), 253–270.

\bibitem{batista15} Barros, A., Batista, R. and Ribeiro Jr, E.: Bounds on volume growth of geodesic balls for Einstein warped products. Proc. Amer. Math. Soc. 143 (2015) 4415-4422.

\bibitem{Silva14} Barros, A., Ribeiro Jr., E. and Silva, J.: Uniqueness of quasi-Einstein metrics on 3-dimensional homogeneous manifolds. Diff. Geom. and its App. 35 (2014) 60-73.

\bibitem{besse} Besse, A.: Einstein manifolds. Springer-Verlag, Berlin Heidelberg (1987).

\bibitem{Cao10} Cao, H-D.: Recent progress on Ricci solitons, in Recent Advances in Geometric Analysis (Taipei, 2007), Adv. Lect. Math. (ALM) 11, International Pres, Somerville, Mass., 2010, 1-38.

\bibitem{Cao11} Cao, H-D.: Geometry of complete gradient shrinking Ricci solitons. Geometry and Analysis, No.1 (Cambridge, Mass., 2008), Adv. Lect. Math. (ALM) 17, International Pres, Somerville, Mass., 2011, 227-246.

\bibitem{CC12} Cao, H-D. and Chen, Q.: On locally conformally flat gradient steady Ricci solitons. Trans. Am. Math. Soc. 364–5, (2012) 2377–2391.

\bibitem{CaoChen} Cao, H-D. and Chen, Q.: On Bach-flat gradient shrinking Ricci solitons. Duke Math. J. 162 (2013) 1149-1169.

\bibitem{CCCMM} Cao, H-D., Giovanni C., Chen, Q., Mantegazza, C. and Mazzieri, L.: On Bach-flat gradient steady Ricci solitons. Calculus of Variation. 49 (2014) 125-138.

\bibitem{CSW} Case, J., Shu, Y. and Wei, G.: Rigidity of quasi-Einstein metrics. Differential Geom. Appl. 29 (2011), 93-100.

\bibitem{CC} Chen, Q. and He, C.: On Bach flat warped product Einstein manifolds. Pacific J. Math. 265 (2013), 313–326

\bibitem{catino13} Catino, G.: Complete gradient shrinking Ricci solitons with pinched curvature. Math. Ann. 355 (2013), 629-635.

\bibitem{catino} Catino, G.:  Generalized quasi-Einstein manifolds with harmonic Weyl tensor. Math. Z. 271 (2012), 751-756.

\bibitem{cmm} Catino, G. and Matrolia, P. and Monticelli, D.: Grandient Ricci solitons with vanishing condictions on Weyl. J. Math. Pures Appl. 108 (2017) 1-13.

\bibitem{derd1} Derdzinski, A.: Self-dual K\"ahler manifold and Einstein manifold of dimension four. Compos. Math. 49 (1983) 405-433.

\bibitem{FG11} Fern\'andez-L\'opez, M. and Garc\'ia-R\'io, E.: Rigidity of shrinking Ricci solitons. Math. Z. 269 (2011) 461-466.

\bibitem{CPW} He, C., Petersen P. and Wylie, W.: On the classification of warped product Einstein metrics. Comm. Anal. Geom., 20 (2012) 271-311.

\bibitem{KK} Kim, D. and Kim, Y.: Compact Einstein warped product spaces with nonpositive scalar curvature. Proc. Amer. Math. Soc. 131 (2003) 2573-2576.

\bibitem{Bleandro} Leandro, B. Generalized quasi-Einstein manifolds with harmonic anti-self dual Weyl tensor. Arch. Math. 106 (2016) 489--499.

\bibitem{MS13} Munteanu, O. and Sesum, N.: On gradient Ricci solitons. J. Geom. Anal. 23 (2013), no. 2, 539-561.

\bibitem{pwylie} Petersen, P. and Wylie, W.: On the classification of gradient ricci solitons.  Geom. Topol. 14 (2010), 2277-2300.

\bibitem{Qian} Qian, Z.: Estimates for weighted volumes and applications. Quart. J. Math. 48 (1997), 235–242.

\bibitem{Ribeiro} Ranieri, M. and Ribeiro Jr., E.: Bach-flat noncompact steady quasi-Einstein manifold. Arch. Math. 108 (2017) 507-519.

\bibitem{Rimoldi} Rimoldi, M.: A remark on Einstein warped products. Pacific J. Math. 252 (2011), 207-218.

\bibitem{WW07} Wei, G. and Wylie, W.: Comparison geometry for the smooth metric measure spaces. pp.191-202 in Proc. of the 4th ICCM, vol. 2, Higher Education Press, Beijing, 2007.

\bibitem{WW} Wei, G. and Wylie, W.: Comparison geometry for the Bakry-Emery Ricci tensor. J. Differential Geom., 83 (2009), no. 2, 377-405.

\bibitem{Wang} Wang, L.: On noncompact $\tau-$quasi-Einstein metrics. Pacific J. Math. 245 (2011), 449-464.

\bibitem{YZ} Yang, F. and Zhang, L.: Rigidity of gradient shrinking Ricci solitons. arXiv: 1705.09754v1 [math.DG].

\bibitem{YWZ} Yang, F., Wang, Z. and Zhang, L.: On the classification on four-dimensional gradient Ricci solitons. arXiv: 1707.04846v1 [math.DG].


\end{thebibliography}
\end{document}